\providecommand{\customgenericname}{}
\newcommand{\newcustomtheorem}[2]{%
  \newenvironment{#1}[1]
  {%
   \renewcommand\customgenericname{#2}%
   \renewcommand\theinnercustomgeneric{##1}%
   \innercustomgeneric
  }
  {\endinnercustomgeneric}
}
\theoremstyle{plain}
\newtheorem{theorem}{Theorem}
\newtheorem{lemma}{Lemma}
\newtheorem*{corollary*}{Corollary}
\newtheorem*{theorem*}{Theorem}
\newtheorem*{proposition*}{Theorem}
\newtheorem*{question*}{Question}
\newtheorem*{conjecture*}{Conjecture}
\theoremstyle{definition}
\theoremstyle{remark}
\DeclareMathAlphabet{\mathbbold}{U}{bbold}{m}{n}
\def\bb1{\mathbbold{1}}
\def\bbq{\mathbb{Q}}
\def\bbc{\mathbb{C}}
\DeclareMathOperator\Aut{Aut}
\DeclareMathOperator\ad{ad}
\newtheorem*{remark*}{Remark}
\DeclareDocumentCommand{\adx}{ O{2} O{x_1}  }{\ad_{#2}^{[#1]}}
\begin{document}
\title{On the Morita equivalence class of a finitely presented algebra}
\author{Adel Alahmadi\protect\endnotemark[1], Hamed Alsulami\protect\endnotemark[1], Efim Zelmanov\protect\endnotemark[2]$^{,1}$}
	
	

	\keywords{Morita equivalence, finitely presented algebra}
	
	\dedicatory{To our teacher and friend S.~K.~Jain.}

\address{1. To whom correspondence should be addressed\hfill\break
E-mail: ezelmano@math.ucsd.edu\hfill\break
Author Contributions: A.A., H.A, E. Z. designed research, performed research, and wrote the paper. The authors declare no conflict of interest.}
	

\maketitle
Let $F$ be a field and let $A,B$ be $F$-algebras. The algebras $A,B$ are Morita equivalent if their categories of (left) modules are equivalent. Throughout the paper we consider algebras with $1$ and unital modules over them. Then $A$ is Morita equivalent to $B$ if and only if $A \cong eM_n(B)e,$ where $M_n(B)$ is the algebra of $n \times n$ matrices over $B$ and $e = e^2 \in M_n(B)$ is a full idempotent, i.e. $M_n(B)eM_n(B) = M_n(B)$.

In [2], [3], [4] the authors described Morita equivalence classes of some important finitely presented algebras and, in particular, showed that these classes are countable.

In this note we discuss Morita equivalence classes of arbitrary finitely presented algebras. Let $\sigma$ be an automorphism of the field $F$. The mapping $\varphi: A \to B$ is called a $\sigma$-semilinear isomorphism if $\varphi$ is a ring isomorphism and $\varphi(\alpha a) = \sigma(\alpha)\varphi(a)$ for all $\alpha \in F$, $a \in A$.

Let $V$ be a vector space over the field $F$. On the abelian group $V$ we define a new multiplication by scalars: $\alpha \cdot v = \sigma(\alpha)v$. We will denote the new vector space as $V^{(\sigma)}$.

For an $F$-algebra $A$ define a new $F$-algebra structure $A^{(\sigma)}$ on $A$ by keeping the ring structure and switching to the vector space $A^{(\sigma)}$ instead of $A$.

It is easy to see that the identical mapping $A \to A^{(\sigma^{-1})}$ is a $\sigma$-semilinear isomorphism.

Suppose that an $F$-algebra $A$ is generated by a finite collection of elements $a_1,\ldots,a_m$. Consider the free associative algebra $F\langle x_1,\ldots,x_m \rangle$ and the homomorphism
\[
	F\langle x_1,\ldots,x_m  \rangle \overset{\varphi}{\longrightarrow} A, \ x_i \mapsto a_i, \ 1 \leq i \leq m.
\]
Let $R$ be a subset of the ideal $I = \ker \varphi$ that generates $I$ as an ideal. We say that the algebra $A$ has presentation
\[
	A = \langle x_1,\ldots,x_m \mid R = 0\rangle.
\]
If the set $R$ is finite then the algebra $A$ is said to be finitely presented. This property does not depend on a choice of a generating system as long as the system is finite.

It is easy to see that semilinearly isomorphic algebras are Morita equivalent. Hence the group $\Aut F$ acts on the class of Morita equivalence of the algebra $A$.

\begin{theorem}\label{Theorem1}
Let $F$ be an algebraically closed field and let $A$ be a finitely presented $F$-algebra. Then the action of $\Aut F$ on the class of Morita equivalence of $A$ has countably many orbits.
\end{theorem}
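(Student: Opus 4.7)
The plan combines Morita invariance of finite presentation with a count of $\Aut F$-orbits on isomorphism classes of finitely presented $F$-algebras. First, every algebra $B$ in the Morita class of $A$ is itself finitely presented: writing $B \cong eM_n(A)e$ for a full idempotent $e$, the matrix algebra $M_n(A)$ is finitely presented (with generators the $n^2$ matrix units together with the generators of $A$, and relations the matrix-unit relations, the relations of $A$, and the commutativity of the generators of $A$ with the matrix units), and the corner at a full idempotent inherits finite presentation by standard Morita theory.

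Second, I claim the set of isomorphism classes of finitely presented $F$-algebras, modulo the $\Aut F$-action, is countable (regardless of $A$). A finite presentation $B = F\langle y_1, \ldots, y_k \mid s_1, \ldots, s_\ell \rangle$ is determined by the integers $k, \ell$, a finite set $S_j$ of words in $y_1, \ldots, y_k$ appearing in each $s_j$, and the $F$-coefficients of the $s_j$, which form a point in $F^N$ with $N = \sum_j |S_j|$. The combinatorial data $(k, \ell, S_j)$ ranges over a countable set, and for fixed combinatorial data a direct check using the paper's conventions shows that applying $\sigma \in \Aut F$ coordinate-wise to the coefficients yields the algebra $B^{(\sigma^{-1})}$. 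Hence the number of $\Aut F$-orbits on isomorphism classes of f.p.\ algebras is bounded above by the total number of $\Aut F$-orbits on the disjoint union of the spaces $F^N$.

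The crux is then the following sub-claim: for algebraically closed $F$ and finite $N$, $\Aut F$ has countably many orbits on $F^N$. Two tuples $\bar a, \bar b \in F^N$ lie in the same orbit if and only if there is an isomorphism $\tau : F_0(\bar a) \to F_0(\bar b)$ of the subfields generated over the prime field $F_0$ sending $\bar a \mapsto \bar b$. The nontrivial direction uses that $F$ is algebraically closed and that the transcendence degrees of $F$ over $F_0(\bar a)$ and $F_0(\bar b)$ agree (since the subfields are abstractly isomorphic), so $\tau$ lifts to an isomorphism between the algebraic closures of $F_0(\bar a)$ and $F_0(\bar b)$ in $F$ and then extends to an element of $\Aut F$ via any bijection of transcendence bases of $F$ over those closures. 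The orbits therefore correspond to isomorphism classes of finitely generated field extensions of $F_0$ marked by $N$ elements, a countable collection. Combined with the first step, this yields countably many $\Aut F$-orbits on the Morita class of $A$. The principal obstacle is this sub-claim, particularly the extension of subfield isomorphisms to $\Aut F$, where the algebraic closure of $F$ plays the essential role; a secondary technical point is the Morita invariance of finite presentation used in the first step.
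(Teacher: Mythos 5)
Your proposal is correct, but it reaches the conclusion by a genuinely different route than the paper. Both arguments rest on the same external input, namely that every algebra Morita equivalent to $A$ is finitely presented; be aware that your phrase ``the corner at a full idempotent inherits finite presentation by standard Morita theory'' is precisely the main theorem of the paper's reference [1], not a routine fact, so it should be cited rather than treated as obvious. Beyond that point the paper does not classify orbits at all: it fixes one countable algebraically closed subfield $\widetilde{F_0(X_0)}$ (the algebraic closure in $F$ of $F_0(X_0)$, where $X_0$ is a countable subset of a transcendence basis $X$ of $F$ over the prime field) and shows that every finitely presented $B$ has a twist $B^{(\tau)}$ finitely presented over that subfield, by extending a bijection of $X$ that moves the finitely many relevant transcendentals into $X_0$ to an automorphism of $F$; countability then follows since there are only countably many finite presentations over a countable field. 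You instead prove the stronger statement that $\Aut F$ has countably many orbits on isomorphism classes of all finitely presented $F$-algebras, reducing it to the sub-claim that $\Aut F$ has countably many orbits on $F^N$. That sub-claim and your proof of it are correct: the orbit of $\bar a$ is determined by the marked extension $F_0(\bar a)$, and the nontrivial direction works exactly as you say, lifting $\tau$ to the algebraic closures of $F_0(\bar a)$ and $F_0(\bar b)$ inside $F$ and then across matching transcendence bases of $F$ over these closures (your equality of the relevant transcendence degrees holds in both the finite and infinite degree cases, and algebraic closedness of $F$ is what allows the final extension). To finish cleanly you should make the countability of marked finitely generated extensions explicit, e.g.\ by identifying them with prime ideals of the countable Noetherian ring $F_0[x_1,\dots,x_N]$, and note that the coefficient-twisting equivariance you invoke is exactly the paper's Lemma \ref{Lemma1}. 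In exchange for a somewhat longer argument, your route yields an actual classification of the orbits and treats countable and uncountable $F$ uniformly, whereas the paper's route avoids any classification by pushing all coefficients into a single fixed countable algebraically closed subfield.
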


In other words, the Morita equivalence class is countable up to semilinear isomorphisms. The following observation is straightforward.

\begin{lemma}\label{Lemma1}
Let
\begin{align*}
	A = &\langle x_1,\ldots,x_m \mid \sum\nolimits_j{\alpha_{ij}w_{ij} = 0}, \ 1 \leq i \leq n, \ \alpha_{ij} \in F, \\
	&w_{ij} \text{ are words in } x_1,\ldots,x_m\rangle.
\end{align*}
Then
\[
	A^{(\sigma)} = \langle x_1,\ldots,x_m \mid \sum_j{\sigma^{-1}(\alpha_{ij})w_{ij}} = 0, \ 1 \leq i \leq n\rangle.
\]
\end{lemma}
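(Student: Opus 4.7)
The plan is to exhibit an explicit $F$-algebra isomorphism between
\[
B := \langle x_1,\ldots,x_m \mid \sum\nolimits_j \sigma^{-1}(\alpha_{ij})w_{ij} = 0,\ 1\le i\le n\rangle
\]
and $A^{(\sigma)}$, induced by $x_i \mapsto a_i$. Since $A^{(\sigma)}$ has the same ring structure as $A$ but with the twisted scalar action $\alpha\cdot v = \sigma(\alpha)v$, any $F$-algebra homomorphism $\psi : F\langle x_1,\ldots,x_m\rangle \to A^{(\sigma)}$ with $\psi(x_i)=a_i$ must satisfy $\psi(\alpha w) = \sigma(\alpha)\, w(a_1,\ldots,a_m)$ on each monomial $\alpha w$, the product on the right being taken in the ring $A$.

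First I would introduce the ring automorphism $\tilde\sigma$ of the free algebra $F\langle x_1,\ldots,x_m\rangle$ that fixes each $x_i$ and acts as $\sigma$ on $F$, and denote by $\pi$ the original presentation map $F\langle x_1,\ldots,x_m\rangle \to A$, whose kernel $I$ is the two-sided ideal generated by $R = \{\sum_j \alpha_{ij}w_{ij} : 1\le i\le n\}$. Comparing on the ring-generating set $F \cup \{x_1,\ldots,x_m\}$ of the free algebra one verifies $\psi = \pi\circ\tilde\sigma$ as ring maps; hence $\ker\psi = \tilde\sigma^{-1}(I)$.

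Because $\tilde\sigma^{-1}$ is a ring automorphism of $F\langle x_1,\ldots,x_m\rangle$, the ideal $\tilde\sigma^{-1}(I)$ is the two-sided ideal generated by $\tilde\sigma^{-1}(R) = \{\sum_j \sigma^{-1}(\alpha_{ij})w_{ij} : 1\le i\le n\}$. Together with the surjectivity of $\psi$ (immediate since $a_1,\ldots,a_m$ generate $A$ as a ring, hence $A^{(\sigma)}$ as well), the first isomorphism theorem yields an $F$-algebra isomorphism $B \xrightarrow{\sim} A^{(\sigma)}$.

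There is no real obstacle; the only subtlety is keeping the two $F$-structures disentangled and, in particular, noting that $\psi$, although an $F$-algebra map into $A^{(\sigma)}$, coincides as a ring map with the composition $\pi\circ\tilde\sigma$ of ordinary ring maps between ordinary $F$-algebras. Once this identification is made, the remainder is routine bookkeeping with ideals in the free algebra.
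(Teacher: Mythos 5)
Your argument is correct and complete: identifying the $F$-algebra map $\psi\colon F\langle x_1,\ldots,x_m\rangle\to A^{(\sigma)}$, $x_i\mapsto a_i$, with $\pi\circ\tilde\sigma$ as a ring map, so that $\ker\psi=\tilde\sigma^{-1}(I)$ is generated by $\{\sum_j\sigma^{-1}(\alpha_{ij})w_{ij}\}$, is exactly the routine verification the paper has in mind when it dismisses the lemma as a straightforward observation and gives no proof. Nothing is missing, and the direction of the twist ($\sigma^{-1}$ on the coefficients) comes out correctly.
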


In [1] it was shown that finite presentation is a Morita invariant property. In other words, the Morita equivalence class of a finitely presented $F$-algebra $A$ consists of finitely presented $F$-algebras. If the field $F$ is countable then it immediately follows that the Morita equivalence class of $A$ is countable up to isomorphisms.

Assume therefore that the field $F$ is not countable.
\begin{proof}[Proof of Theorem \ref{Theorem1}]
Let $F_0$ be the prime subfield of $F$. Let $X \subset F$ be a maximal subset of $F$ that is algebraically independent over $F_0$. Then $F$ is the algebraic closure of the purely transcendental extension $F_0(X)$. Since the field $F$ is uncountable it follows that the set $X$ is uncountable as well.

Let $X_0$ be a countable subset of $X$. Let $F_0(X_0)$ be the purely transcendental $F_0$-extension generated by $X_0$ and let $\widetilde{F_0(X_0)}$ be the algebraic closure of $F_0(X_0)$ in $F$.

We claim that for any finite collection of elements $\alpha_1, \ldots, \alpha_n \in F$ there exists an automorphism $\sigma \in \Aut F$ that maps $\alpha_1,\ldots,\alpha_n$ to $\widetilde{F_0(X_0)}$. Indeed, there exists a finite subset $X' \subset X$ such that $\alpha_1,\ldots,\alpha_n \in \widetilde{F_0(X')}$. Let $p: X \to X$ be a bijection such that $p(X') \subset X_0$. The bijection $p$ extends to an automorphism $\sigma \in \Aut F$. Clearly, $\sigma(\alpha_i) \in \widetilde{F_0(X_0)}$.

We say that an algebra is finitely presented over a subfield $K \subset F$ if it has a presentation
\[
	\langle x_1,\ldots,x_m \mid R = 0\rangle, \ |R| < \infty, \ R \subset K\langle x_1,\ldots,x_m \rangle.
\]
If $B$ is a finitely presented $F$-algebra then there exists an automorphism $\tau \in \Aut F$ such that $B^{(\tau)}$ is finitely presented over $\widetilde{F_0(X_0)}$.

Indeed, let
\begin{align*}
	B = &\langle x_1,\ldots,x_m \mid \sum\nolimits_j{\alpha_{ij}w_{ij} = 0}, \ 1 \leq i \leq n, \ \alpha_{ij} \in F, \\
	&w_{ij} \text{ are words in } x_1,\ldots,x_m\rangle.
\end{align*}
There exists an automorphism $\sigma \in \Aut F$ such that $\sigma(\alpha_{ij}) \in \widetilde{F_0(X_0)}$ for all $i,j$. By Lemma \ref{Lemma1} it implies that $B^{(\sigma^{-1})}$ is finitely presented over $\widetilde{F_0(X_0)}$.

An arbitrary algebra $B$ that is Morita equivalent to $A$ is finitely presented [1]. Hence there exists $\tau \in \Aut F$ such that $B^{(\tau)}$ is finitely presented over $\widetilde{F_0(X_0)}$. Since the field $\widetilde{F_0(X_0)}$ is countable it completes the proof of Theorem \ref{Theorem1}.
\end{proof}

Now we will show that, generally speaking, the class of Morita equivalence of a finitely presented algebra may be uncountable up to isomorphisms.

\begin{lemma}\label{Lemma2}
Let $\alpha \in F$. Consider the algebra
\[
	A_{\alpha} = \langle x_1, x_2 \mid x_1^2 + x_2^2 + \alpha x_1x_2 = 0\rangle.
\]
Then $A_\alpha \cong A_\beta$ if and only if $\beta = \pm \alpha$.
\end{lemma}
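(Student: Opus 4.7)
The \emph{if} direction is immediate: the assignment $x_1 \mapsto -x_1$, $x_2 \mapsto x_2$ sends $x_1^2 + x_2^2 - \alpha x_1 x_2$ to $x_1^2 + x_2^2 + \alpha x_1 x_2$, giving $A_{-\alpha} \cong A_\alpha$.

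For the converse, I plan to extract from the defining relation a $\GL_2$-invariant that recovers $\alpha^2$. Set $V = Fx_1 + Fx_2$ and decompose $V \otimes V = S^2 V \oplus \Lambda^2 V$; the relation $r_\alpha = x_1^2 + x_2^2 + \alpha x_1 x_2$ splits as $r_\alpha = s_\alpha + a_\alpha$ with $s_\alpha \in S^2 V$ corresponding to the symmetric bilinear form $\bigl(\begin{smallmatrix} 1 & \alpha/2 \\ \alpha/2 & 1 \end{smallmatrix}\bigr)$ of determinant $1 - \alpha^2/4$, and $a_\alpha = \frac{\alpha}{2}(x_1 \wedge x_2) \in \Lambda^2 V$. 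Under $g \in \GL(V)$ the determinant of the symmetric form scales by $(\det g)^2$ while the $\Lambda^2 V$-coefficient of $a$ scales by $\det g$, so the ratio $\det(s_\alpha)/[\text{coefficient of }a_\alpha]^2 = (4-\alpha^2)/\alpha^2$ is $\GL(V)$-invariant and invariant under the rescaling $r \mapsto \lambda r$; it is therefore an invariant of the line $Fr_\alpha \subset V \otimes V$. Any graded isomorphism $A_\alpha \to A_\beta$ is specified by a $g \in \GL(V)$ with $g \otimes g(Fr_\alpha) = Fr_\beta$, so this invariant agrees for $\alpha$ and $\beta$, forcing $\alpha^2 = \beta^2$ and $\beta = \pm\alpha$. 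The degenerate value $\alpha = 0$ (where the invariant is formally infinite) is handled by noting that then $r_0 \in S^2 V$, a $\GL(V)$-stable subspace, so $r_\beta \in S^2 V$ as well, forcing $\beta = 0$.

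The main obstacle is promoting an arbitrary algebra isomorphism $\varphi: A_\alpha \to A_\beta$ to a graded one. First, the abelianization $A_\gamma^{\mathrm{ab}} = F[y,z]/(y^2 + \gamma yz + z^2)$ has nonzero nilpotents iff the discriminant $\gamma^2 - 4$ vanishes, and $\varphi^{\mathrm{ab}}$ transports this property, yielding $\alpha^2 = 4 \iff \beta^2 = 4$. In the case $\alpha^2 = 4$ this already gives $\beta^2 = 4$, hence $\beta = \pm 2 = \pm\alpha$. In the case $\alpha^2 \neq 4$ (so also $\beta^2 \neq 4$) I would characterize the augmentation ideal $J_\alpha = (x_1, x_2)$ as the unique two-sided maximal ideal $N$ of $A_\alpha$ with $A_\alpha/N \cong F$ and $\dim_F N/N^2 = 2$: after translating the defining relation to a point $(c_1, c_2)$ on the conic $\xi_1^2 + \xi_2^2 + \alpha\xi_1\xi_2 = 0$, modulo $N^2$ it reduces to the linear equation $(2c_1 + \alpha c_2)y_1 + (2c_2 + \alpha c_1)y_2 = 0$, whose coefficient matrix $\bigl(\begin{smallmatrix} 2 & \alpha \\ \alpha & 2 \end{smallmatrix}\bigr)$ has determinant $4 - \alpha^2 \neq 0$, forcing $\dim N/N^2 = 1$ unless $(c_1, c_2) = (0, 0)$. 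Since $\varphi$ preserves maximal ideals of residue field $F$ and the numerical invariant $\dim N/N^2$, one obtains $\varphi(J_\alpha) = J_\beta$; because $J_\alpha^n = \bigoplus_{k \geq n} A_\alpha^k$, the $J$-adic associated graded is canonically $A_\alpha$ again, and $\operatorname{gr}(\varphi)$ is the graded isomorphism $A_\alpha \to A_\beta$ to which the invariant of the previous paragraph applies.
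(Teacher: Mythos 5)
Your proposal is correct (in characteristic $\neq 2$, an assumption the paper's own argument also uses implicitly via the matrix $\bigl(\begin{smallmatrix}2&\beta\\ \beta&2\end{smallmatrix}\bigr)$ and the quadratic-form step), but it reaches the graded situation by a genuinely different route. The paper works head-on with an arbitrary generating pair $y_1,y_2$ of $A_\alpha$ satisfying the $\beta$-relation: writing $y_i=\alpha_{i0}+\alpha_{i1}x_1+\alpha_{i2}x_2+(\text{higher terms})$, it uses the vanishing of the linear part of $y_1^2+y_2^2+\beta y_1y_2$ to force $\alpha_{10}=\alpha_{20}=0$ whenever $\beta\neq\pm2$, and then disposes of the exceptional values $\beta=\pm2$ by rerunning the argument with $\alpha$ and $\beta$ interchanged. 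You instead reduce to a graded isomorphism functorially: nilpotents in the abelianization detect $\alpha^2=4$ (settling that case outright), the intrinsic characterization of the augmentation ideal as the unique $N$ with $A/N\cong F$ and $\dim N/N^2=2$ shows any isomorphism is augmentation-preserving when $\alpha^2\neq 4$, and the $J$-adic associated graded converts it into a graded one induced by some $g\in\GL(V)$. At that point your $S^2V\oplus\Lambda^2V$ invariant $(4-\alpha^2)/\alpha^2$ is essentially the same computation the paper compresses into the congruence $Q^T\bigl(\begin{smallmatrix}1&\beta\\0&1\end{smallmatrix}\bigr)Q=\gamma\bigl(\begin{smallmatrix}1&\alpha\\0&1\end{smallmatrix}\bigr)$ together with ``well-known'' quadratic-form facts (symmetric part: compare determinants; antisymmetric part: $\beta\det Q=\gamma\alpha$). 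What each buys: the paper's proof is shorter and entirely elementary, at the price of juggling constant terms and a case split on $\beta=\pm2$; yours is longer but more structural, isolates reusable invariants (abelianization, $\dim N/N^2$, associated graded), and makes transparent why only the degree-$2$ data matters. One small point to tidy: in the invariant step, when $\alpha\neq0$ you should note that $\beta\neq0$ before forming the ratio for $\beta$ (the $\Lambda^2V$-component of $(g\otimes g)r_\alpha$ equals $\det g\cdot\alpha/2\neq0$), the mirror image of your $\alpha=0$ remark.
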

\begin{proof}
Clearly, $A_\alpha \cong A_{-\alpha}$. Suppose now that $\beta \neq \pm \alpha$. We will show that the algebra $A_\alpha$ does not contain generators $y_1, y_2$ such that $y_1^2 + y_2^2 + \beta y_1y_2 = 0$.

Suppose the contrary and let
\begin{align*}
y_1 &= \alpha_{10} + \alpha_{11}x_1 + \alpha_{12}x_2 + y_1' \\
y_2 &= \alpha_{20} + \alpha_{21}x_1 + \alpha_{22}x_2 + y_2'
\end{align*}
be such generating elements; $\alpha_{ij} \in F$; $y_1', y_2'$ are linear combinations of monomials of length $\geq 2$. The matrix
\[
	Q = \begin{pmatrix}	\alpha_{11} & \alpha_{12} \\ \alpha_{21} & \alpha_{22} \end{pmatrix}
\]
is nonsingular, otherwise the elements $y_1, y_2$ do not generate $A\alpha$.

The linear part of $y_1^2 + y_2^2 + \beta y_1y_2$ is equal to
\begin{align*}
	&2 \alpha_{10}(\alpha_{11}x_1 + \alpha_{12}x_2) + 2 \alpha_{20}(\alpha_{21}x_1 + \alpha_{22} x_2) + \beta \alpha_{10}(\alpha_{21}x_1 + \alpha_{22}x_2) \\
	&\hspace{1cm} +\beta \alpha_{20}(\alpha_{11}x_1 + \alpha_{12}x_2) \\
	&= (2 \alpha_{10}\alpha_{11} + 2 \alpha_{20}\alpha_{21} + \beta \alpha_{10}\alpha_{21} + \beta \alpha_{20}\alpha_{11}) x_1 \\
	&\hspace{1cm} +(2 \alpha_{10}\alpha_{12} + 2\alpha_{20} \alpha_{22} + \beta \alpha_{10} \alpha_{22} + \beta \alpha_{20} \alpha_{21})x_2\\
	&= 0.
\end{align*}
Hence
\begin{align*}
	\alpha_{10}(2\alpha_{11} + \beta \alpha_{21}) + \alpha_{20}(2 \alpha_{21} + \beta \alpha_{11}) &= 0, \\
	\alpha_{10}(2\alpha_{12} + \beta \alpha_{22}) + \alpha_{20}(2 \alpha_{22} + \beta \alpha_{12}) &= 0.	
\end{align*}
We have
\[
	\begin{pmatrix} 2 \alpha_{11} + \beta \alpha_{21} & 2 \alpha_{21} + \beta \alpha_{11} \\ 2 \alpha_{12} + \beta \alpha_{22} & 2 \alpha_{22} + \beta \alpha_{12} \end{pmatrix} = \begin{pmatrix} 2 & \beta \\ \beta & 2 \end{pmatrix} \begin{pmatrix} \alpha_{11} & \alpha_{12} \\ \alpha_{21} & \alpha_{22} \end{pmatrix}.
\]
If $\beta \neq \pm 2$ then this matrix is nonsingular, which implies $\alpha_{10} = \alpha_{20} = 0$.

Suppose that this is the case, i.e. $\beta \neq \pm 2$, $\alpha_{10} = \alpha_{20} = 0$.

The homogeneous component of degree $2$ of $y_1^2 + y_2^2 + \beta y_1y_2$ is equal to
\[
	(\alpha_{11}x_1 + \alpha_{12}x_2)^2 + (\alpha_{21}x_1 + \alpha_{22}x_2)^2 + \beta(\alpha_{11}x_1 + \alpha_{12}x_2)(\alpha_{21}x_1 + \alpha_{22}x_2) = 0.
\]
Let $x = (x_1,x_2)$. Then the left hand side is equal to $xQ^T\begin{pmatrix} 1 & \beta \\ 0 & 1 \end{pmatrix} Qx^T$. Since this expression is equal to zero in the algebra
\[
\langle x_1,x_2 \mid x_1^2 + x_2^2 + \alpha x_1 x_2 = 0 \rangle
\]
it follows that in the free algebra it is equal to $\gamma(x_1^2 + x_2^2 + \alpha x_1 x_2), \gamma \in F$. Hence,
\[
	Q^T \begin{pmatrix} 1 & \beta \\ 0 & 1 \end{pmatrix} Q = \gamma \begin{pmatrix} 1 & \alpha \\ 0 & 1 \end{pmatrix}.
\]
It is well-known in the theory of quadratic forms that this equality implies $\beta = \pm \alpha$.

We proved that $\beta = \pm 2$ or $\alpha = \pm \beta$. Similarly, $\alpha = \pm 2$ or $\beta = \pm \alpha$. Thus, if $\alpha \neq \pm \beta$ then $\alpha = \pm 2$ and $\beta = \pm 2$ which again implies that $\alpha = \pm \beta$. This completes the proof of the lemma.
\end{proof}

\vspace{12pt}
\begin{corollary*}
If a complex number $\alpha \in \bbc$ is not algebraic over $\bbq$ then the Morita equivalence class of the algebra $A_\alpha$ is uncountable.
\end{corollary*}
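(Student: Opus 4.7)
The plan is to exhibit uncountably many pairwise non-isomorphic algebras inside the Morita equivalence class of $A_\alpha$, all of the form $A_\beta$ for transcendental $\beta \in \bbc$, by combining Lemma~\ref{Lemma1} with Lemma~\ref{Lemma2} and the strong transitivity of $\Aut\bbc$ on transcendentals.

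First, applying Lemma~\ref{Lemma1} to $A_\alpha$, whose only non-rational defining coefficient is $\alpha$, yields
\[
A_\alpha^{(\sigma)} = \langle x_1, x_2 \mid x_1^2 + x_2^2 + \sigma^{-1}(\alpha)\, x_1 x_2 = 0\rangle = A_{\sigma^{-1}(\alpha)}
\]
for every $\sigma \in \Aut\bbc$. Since the identity is a semilinear isomorphism between $A_\alpha$ and $A_\alpha^{(\sigma)}$, and semilinearly isomorphic algebras are Morita equivalent, we conclude that $A_{\sigma(\alpha)}$ is Morita equivalent to $A_\alpha$ for every $\sigma \in \Aut\bbc$. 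Next I would observe that $\Aut\bbc$ acts transitively on the set of elements of $\bbc$ transcendental over $\bbq$: given any transcendental $\beta$, extend $\{\alpha\}$ and $\{\beta\}$ to transcendence bases $X$ and $X'$ of $\bbc/\bbq$, both of cardinality $2^{\aleph_0}$; any bijection $X \to X'$ sending $\alpha \mapsto \beta$ extends to an isomorphism $\bbq(X) \to \bbq(X')$ of the corresponding purely transcendental extensions and then to an automorphism of their common algebraic closure $\bbc$. Hence every $A_\beta$ with $\beta$ transcendental lies in the Morita equivalence class of $A_\alpha$.

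Finally, Lemma~\ref{Lemma2} tells us $A_\beta \cong A_{\beta'}$ only when $\beta' = \pm\beta$. Since there are $2^{\aleph_0}$ transcendental elements in $\bbc$, quotienting by $\beta \sim -\beta$ still leaves continuum-many isomorphism classes, each represented in the Morita equivalence class of $A_\alpha$. The only step with genuine content is the transitivity of $\Aut\bbc$ on transcendentals, which is a standard application of the isomorphism extension theorem for fields; everything else is direct bookkeeping from the two preceding lemmas.
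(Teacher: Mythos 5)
Your argument is correct and follows essentially the same route as the paper: realize each $A_{\sigma(\alpha)}$, $\sigma \in \Aut\bbc$, as semilinearly isomorphic (hence Morita equivalent) to $A_\alpha$ via Lemma~\ref{Lemma1}, note that the orbit of a transcendental $\alpha$ is uncountable, and then use Lemma~\ref{Lemma2} to see that these algebras fall into uncountably many isomorphism classes. The only differences are cosmetic: you spell out the transitivity of $\Aut\bbc$ on transcendentals, which the paper merely asserts through the uncountability of the orbit, and you correctly attribute the non-isomorphism step to Lemma~\ref{Lemma2}, where the paper's citation of Lemma~\ref{Lemma1} is evidently a slip.
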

Indeed, if $\alpha$ is not algebraic over $\bbq$ then the orbit $O = \{\sigma(\alpha), \alpha \in \Aut \bbc\}$ is uncountable. The system $\{A_\beta, \ \beta \in O\}$ lies in the Morita equivalence class of $A_\alpha$ and is uncountable by Lemma \ref{Lemma1}.

\vspace{12pt}
\begin{question*}
Let $A$ be a $\bbc$-algebra that is finitely presented over $\bbq$. Is it true that the Morita equivalence class of $A$ is countable?
\end{question*}

\section*{Acknowledgements}
The authors are grateful to F. Eshmatov for helpful discussions.

\endnotetext[1]{Department of Mathematics, King Abdulaziz University, Jeddah, SA,\\
E-mail address: analahmadi@kau.edu.sa; hhaalsalmi@kau.edu.sa;}

\endnotetext[2]{Department of Mathematics, University of California, San Diego, USA\\
E-mail address: ezelmano@math.ucsd.edu}

\theendnotes

\end{document}